\newtheorem{theorem}{Theorem}[section]
\newtheorem{lemma}[theorem]{Lemma}
\newtheorem{proposition}[theorem]{Proposition}
\newtheorem{corollary}[theorem]{Corollary}
\theoremstyle{definition}
\newtheorem{definition}[theorem]{Definition}
\newtheorem{example}[theorem]{Example}
\theoremstyle{remark}
\newtheorem{remark}[theorem]{Remark}
\numberwithin{equation}{section}
\newtheorem{theorem}{Theorem}[section]
\newtheorem{proposition}{Proposition}[section]
\newtheorem{corollary}{Corollary}[section]
\newtheorem{example}{Example}[section]
\newtheorem{definition}{Definition}[section]
\newcommand{\cle}{\preceq}
\newcommand{\opl}{{\oplus}}
\newcommand{\rmin}{\mathbf{R}_{\min}}
\newcommand{\rmax}{\mathbf{R}_{\max}}
\newcommand{\rmaxh}{\hat{\mathbf{R}}_{\max}}
\newcommand{\smaxmin}{S_{\mathrm{max,min}}}
\newcommand{\zmax}{\mathbf{Z}_{\max}}
\newcommand{\zmaxh}{\hat{\mathbf{Z}}_{\max}}
\newcommand{\Z}{\mathbf{Z}}
\newcommand{\suplim}{\sup\limits}
\newcommand{\sumlim}{\sum\limits}
\newcommand{\maxlim}{\max\limits}
\newcommand{\pd}[2]{\dfrac{\partial#1}{\partial#2}}
\newcommand{\alg}{\mathcal{A}}
\newcommand{\cA}{{\mathcal A}}
\newcommand{\cP}{{\mathcal P}}
\newcommand{\0}{\mathbf{0}}
\newcommand{\1}{\mathbf{1}}
\newcommand{\rset}{\mathbf{R}}
\def\C{\mathbf C}
\def\R{\mathbf R}
\def\Rp{{\mathbf R}_+}
\def\x{\mathbf{x}}
\def\intx{\x}
\def\y{\mathbf{y}}
\def\inty{\y}
\def\podx{\underline{\x}}
\def\nadx{\overline{\x}}
\def\pody{\underline{\y}}
\def\nady{\overline{\y}}
\def\Mat{\operatorname{Mat}}
\begin{document}

\title{Idempotent and tropical mathematics. \\ Complexity of algorithms and interval analysis}
\author{G. L. Litvinov\fnref{fn1}}

\address{Institute for Information Transmission Problems of Russian Acad. Sci.}

\address{Nagornaya, 27--4--72, Moscow 117186 Russia, tel. 7-499-1273804}
\ead{glitvinov@gmail.com}

\fntext[fn1]{This work is  supported by the RFBR grant 12-01-00886-a and joint RFBR/CNRS grant 11-01-93106-a}

\begin{abstract}
A very brief introduction to tropical and
idempotent mathematics is presented. Tropical mathematics can be treated as a result of a dequantization of
the traditional mathematics as the Planck constant tends to zero taking
imaginary values. In the framework of idempotent mathematics usually constructions and
algorithms are more simple with respect to their traditional analogs. We
especially examine algorithms of tropical/idempotent mathematics
generated by a collection of basic semiring (or semifield) operations
and other "good" operations. Every algorithm of this type has an interval
version. The complexity of this interval version coincides with the complexity
of the initial algorithm. The interval version of an algorithm of this type gives
exact interval estimates for the corresponding output data. Algorithms of
linear algebra over idempotent and semirings are examined. In this case,
basic algorithms are polynomial as well as their interval versions. This
situation is very different from the traditional linear algebra, where basic
algorithms are polynomial but the corresponding interval versions are
NP-hard and interval estimates are not exact.
\end{abstract}

\begin{keyword}
Tropical mathematics, idempotent mathematics, complexity of algorithms, interval analysis.
\vskip0.1cm
{\it{AMS Classification. Primary:}}  81Q20, 16S80, 65G99, 68Q25, 15A80;
{\it{Secondary:}} 81S99, 65G30, 12K10, 46L65, 28B10, 28A80, 28A25.
\end{keyword}

\maketitle

\section{Introduction}
\label{sec:intro}

Tropical mathematics can be treated  as  a result of a
dequantization of the traditional mathematics as  the Planck
constant  tends to zero  taking imaginary values. This kind of
dequantization is known as the Maslov dequantization and it leads
to a mathematics over tropical algebras like the max-plus algebra.
The so-called idempotent dequantization is a generalization of the
Maslov dequantization. The idempotent dequantization leads to
mathematics over idempotent semirings (exact definitions see below
in sections 2 and 3). For  example,
the field of real or complex numbers can be treated as a quantum
object whereas idempotent semirings can be examined  as
"classical" or "semiclassical" objects  (a semiring is called
idempotent  if the  semiring addition is idempotent, i.e. $x
\oplus x = x$), see~\cite{Li2005,Li2011,LiMa95,LiMa96,LiMa98}.

Tropical algebras are idempotent semirings (and semifields). Thus tropical mathematics is a part of idempotent mathematics. Tropical
algebraic geometry can be treated as a result of the Maslov
dequantization applied to the traditional algebraic geometry (O.
Viro, G. Mikhalkin), see,
e.g.,~\cite{Iten2007,Mi2005,Mi2006,Vir2000,Vir2002,Vir2008}. There
are interesting relations and applications to the traditional
convex geometry.

In the spirit of N.Bohr's correspondence principle there is a (heuristic)  correspondence
between important, useful, and interesting  constructions  and  results over fields and
similar results  over  idempotent  semirings.  A systematic application of this
correspondence  principle (which is a basic paradigm in idempotent/tropical mathematcs)
leads  to  a variety  of theoretical and applied
results~\cite{Li2005,Li2011,LiMa95,LiMa96,LiMa98,LiMa2005,LiMaRS2011}, see Fig.1.

\begin{figure}
\centering
\includegraphics[width=0.9\textwidth]{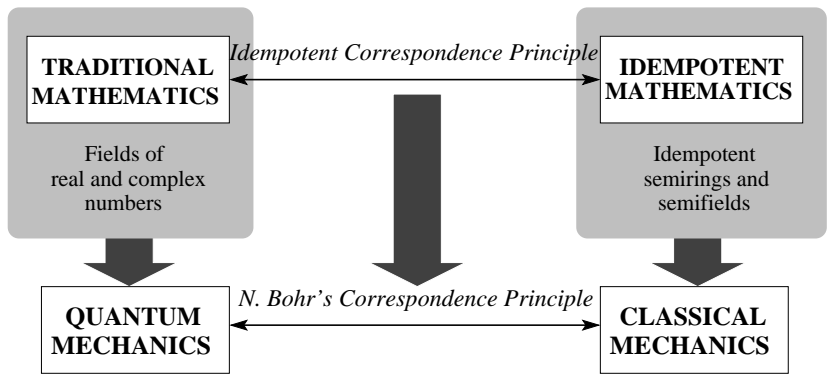}
\caption{Relations between idempotent and traditional mathematics.}
\end{figure}

The history of the subject is discussed, e.g., in~\cite{Li2005}.
There is a large list of references.

In the framework of idempotent mathematics usually constructions and
algorithms are more simple with respect to their traditional analogs (however,
there exist NP-hard problems in tropical linear algebra). We
especially examine algorithms of tropical/idempotent mathematics
generated by a collection of basic semiring (or semifield) operations
and other "good" operations. Every algorithm of this type has an interval
version. The complexity of this interval version coincides with the complexity
of the initial algorithm. The interval version of an algorithm of this type gives
exact interval estimates for the corresponding output data. Algorithms of
linear algebra over idempotent and semirings are examined. In this case,
basic algorithms are polynomial as well as their interval versions. This
situation is very different from the traditional linear algebra, where basic
algorithms are polynomial but the corresponding interval versions are
NP-hard and interval estimates are not exact.
\medskip

\section{The Maslov dequantization}

Let $\R$ and $\C$ be the fields of real and complex numbers. The
so-called max-plus algebra $\R_{\max}= \R\cup\{-\infty\}$ is
defined by the operations $x\oplus y=\max\{x, y\}$ and $x\odot y=
x+y$.

The max-plus algebra can be treated as a result of the {\it Maslov
dequantization} of the semifield $\R_+$ of all nonnegative
numbers with the usual arithmetics. The change of variables
\begin{eqnarray*}
x\mapsto u=h\log x,
\end{eqnarray*}
where $h>0$, defines a map $\Phi_h\colon \R_+\to
\R\cup\{-\infty\}$. This logarithmic transform was used by many authors. Let the addition and multiplication
operations be mapped from \markboth{G.L. Litvinov}{Dequantization...} $\R_+$ to $\R\cup\{-\infty\}$ by $\Phi_h$, i.e.\
let
\begin{eqnarray*}
u\oplus_h v = h \log({\mbox{exp}}(u/h)+{\mbox{exp}}(v/h)),\quad u\odot v= u+ v,\\
\mathbf{0}=-\infty = \Phi_h(0),\quad \mathbf{1}= 0 = \Phi_h(1).
\end{eqnarray*}


It can easily be checked that $u\oplus_h v\to \max\{u, v\}$ as
$h\to 0$. Thus we get the semifield $\R_{\max}$ (i.e.\ the
max-plus algebra) with zero $\mathbf{0}= -\infty$ and unit
$\mathbf{1}=0$ as a result of this deformation of the algebraic
structure~in~$\R_+$.

The semifield $\R_{\max}$ is a typical example of an {\it
 idempotent semiring}; this is a semiring with idempotent addition, i.e.,
 $x\oplus x = x$ for arbitrary element
 $x$ of this semiring.

 The semifield $\R_{\max}$ is also called a \emph{tropical
 algebra}.The semifield $\R^{(h)}=\Phi_h(\R_+)$ with operations
 $\oplus_h$ and $\odot$ (i.e.$+$) is called a \emph{subtropical
 algebra}.

 The semifield $\R_{\min}=\R\cup\{+\infty\}$
with operations $\oplus={\min}$ and $\odot=+$
$(\mathbf{0}=+\infty, \mathbf{1}=0)$ is isomorphic to $\R_{\max}$.

The analogy with quantization is obvious; the parameter $h$ plays
the role of the Planck constant. The map $x\mapsto|x|$ and the
Maslov dequantization for $\R_+$ give us a natural transition from
the field $\C$ (or $\R$) to the max-plus algebra $\R_{\max}$. {\it
We will also call this transition the Maslov dequantization}. In fact
the Maslov dequantization corresponds to the usual Schr\"odinger
dequantization but for  imaginary values of the Planck constant (see below).
The transition from numerical fields to the max-plus algebra
$\R_{\max}$ (or similar semifields) in mathematical constructions
and results generates the so called {\it tropical mathematics}.
The so-called {\it idempotent dequantization} is a generalization
of the Maslov dequantization; this is the transition from basic fields to idempotent semirings in mathematical constructions and results without any deformation. The idempotent dequantization generates
the so-called \emph{idempotent mathematics}, i.e. mathematics over
idempotent semifields and semirings. Recently new versions of the Maslov dequantization appeared, see, e.g. \cite{Vir2010}.

{\bf Remark.} The term 'tropical' appeared in~\cite{Sim88} for a
discrete version of the max-plus algebra (as a suggestion of
Christian Choffrut). On the other hand V.P. Maslov used this term in
80s in his talks and works on economical applications of his
idempotent analysis (related to colonial politics). For the most
part of modern authors, 'tropical' means 'over $\R_{\max}$ (or
$\R_{\min}$)' and tropical algebras are $\R_{\max}$ and $\R_{\min}$.
The terms 'max-plus', 'max-algebra' and 'min-plus' are often used in
the same sense.
\medskip

\section{Semirings and semifields}

Consider a set $S$ equipped with two algebraic operations: {\it
addition} $\oplus$ and {\it multiplication} $\odot$. It is a {\it
semiring} if the following conditions are satisfied:
\begin{itemize}
\item the addition $\oplus$ and the multiplication $\odot$ are
associative; \item the addition $\oplus$ is commutative; \item the
multiplication $\odot$ is distributive with respect to the
addition $\oplus$:
\[x\odot(y\oplus z)=(x\odot y)\oplus(x\odot z)\]
and
\[(x\oplus y)\odot z=(x\odot z)\oplus(y\odot z)\]
for all $x,y,z\in S$.
\end{itemize}
A {\it unity} of a semiring $S$ is an element $\1\in S$ such that
$\1\odot x=x\odot\1=x$ for all $x\in S$. A {\it zero} of a
semiring $S$ is an element (if it exists) $\0\in S$ such that $\0\neq\1$ and
$\0\oplus x=x$, $\0\odot x=x\odot \0=\0$ for all $x\in S$. A
semiring $S$ is called an {\it idempotent semiring} if $x\oplus
x=x$ for all $x\in S$. A semiring $S$ with a neutral element $\1$ is called a {\it semifield} if every nonzero element of
$S$ is invertible with respect to the multiplication. The theory
of semirings and semifields is treated, e.g., in~\cite{Gol99}.
\medskip

\section{Idempotent analysis}

Idempotent analysis deals with functions taking their values in
an idempotent semiring and the corresponding function spaces.
Idempotent analysis was initially constructed by V.~P.~Maslov and
his collaborators and then developed by many authors. The subject
is presented in the book of V.~N.~Kolokoltsov and
V.~P.~Maslov~\cite{KoMa97} (a version of this book in Russian was
published in 1994).

Let $S$ be an arbitrary semiring with idempotent addition $\oplus$
(which is always assumed to be commutative), multiplication
$\odot$, and unit $\1$. The set $S$ is supplied with
the {\it standard partial order\/}~$\cle$: by definition, $a \cle
b$ if and only if $a \oplus b = b$. If the zero element exists, then all elements of $S$ are
nonnegative: $\0 \cle$ $a$ for all $a \in S$. Due to the existence
of this order, idempotent analysis is closely related to the
lattice theory, theory of vector lattices, and theory of ordered
spaces. Moreover, this partial order allows to model a number of
basic ``topological'' concepts and results of idempotent analysis
at the purely algebraic level; this line of reasoning was examined
systematically in~\cite{Li2005}--~\cite{LiSh2007}
and~\cite{CoGaQu2004}.

Calculus deals mainly with functions whose values are numbers. The
idempotent analog of a numerical function is a map $X \to S$,
where $X$ is an arbitrary set and $S$ is an idempotent semiring.
Functions with values in $S$ can be added, multiplied by each
other, and multiplied by elements of $S$ pointwise.

The idempotent analog of a linear functional space is a set of
$S$-valued functions that is closed under addition of functions
and multiplication of functions by elements of $S$, or an
$S$-semimodule. Consider, e.g., the $S$-semimodule $B(X, S)$ of
all functions $X \to S$ that are bounded in the sense of the
standard order on $S$.

If $S = \rmax$, then the idempotent analog of integration is
defined by the formula
$$
I(\varphi) = \int_X^{\oplus} \varphi (x)\, dx     = \sup_{x\in X}
\varphi (x),\eqno{(1)}
$$
where $\varphi \in B(X, S)$. Indeed, a Riemann sum of the form
$\sumlim_i \varphi(x_i) \cdot \sigma_i$ corresponds to the
expression $\bigoplus\limits_i \varphi(x_i) \odot \sigma_i =
\maxlim_i \{\varphi(x_i) + \sigma_i\}$, which tends to the
right-hand side of~(1) as $\sigma_i \to 0$. Of course, this is a
purely heuristic argument.

Formula~(1) defines the \emph{idempotent} (or \emph{Maslov})
\emph{integral} not only for functions taking values in $\rmax$,
but also in the general case when any of bounded (from above)
subsets of~$S$ has the least upper bound.

An \emph{idempotent} (or \emph{Maslov}) \emph{measure} on $X$ is
defined by the formula $m_{\psi}(Y) = \suplim_{x \in Y} \psi(x)$, where $\psi
\in B(X,S)$ is a fixed function. The integral with respect to this measure is defined
by the formula
$$
   I_{\psi}(\varphi)
    = \int^{\oplus}_X \varphi(x)\, dm_{\psi}
    = \int_X^{\oplus} \varphi(x) \odot \psi(x)\, dx
    = \sup_{x\in X} (\varphi (x) \odot \psi(x)).
    \eqno{(2)}
$$

Obviously, if $S = \rmin$, then the standard order is
opposite to the conventional order $\le$, so in this case
equation~(2) assumes the form
$$
   \int^{\oplus}_X \varphi(x)\, dm_{\psi}
    = \int_X^{\oplus} \varphi(x) \odot \psi(x)\, dx
    = \inf_{x\in X} (\varphi (x) \odot \psi(x)),
$$
where $\inf$ is understood in the sense of the conventional order
$\le$.
\medskip

\section{The superposition principle and linear problems}

Basic equations of quantum theory are linear; this is the
superposition principle in quantum mechanics. The Hamilton--Jacobi
equation, the basic equation of classical mechanics, is nonlinear
in the conventional sense. However, it is linear over the
semirings $\rmax$ and $\rmin$. Similarly, different versions of
the Bellman equation, the basic equation of optimization theory,
are linear over suitable idempotent semirings; this is
V.~P.~Maslov's idempotent superposition principle, see
\cite{Mas86,Mas87a,Mas87b}. For instance, the finite-dimensional
stationary Bellman equation can be written in the form $X = H
\odot X \oplus F$, where $X$, $H$, $F$ are matrices with
coefficients in an idempotent semiring $S$ and the unknown matrix
$X$ is determined by $H$ and $F$~\cite{Ca71,Ca79}. In
particular, standard problems of dynamic programming and the
well-known shortest path problem correspond to the cases $S =
\rmax$ and $S =\rmin$, respectively. It is known that principal
optimization algorithms for finite graphs correspond to standard
methods for solving systems of linear equations of this type
(i.e., over semirings). Specifically, Bellman's shortest path
algorithm corresponds to a version of Jacobi's algorithm, Ford's
algorithm corresponds to the Gauss--Seidel iterative scheme,
etc.~\cite{Ca71,Ca79}.

The linearity of the Hamilton--Jacobi equation over $\rmin$ and
$\rmax$, which is the result of the Maslov dequantization of the
Schr{\"o}\-din\-ger equation, is closely related to the
(conventional) linearity of the Schr{\"o}\-din\-ger equation and
can be deduced from this linearity. Thus, it is possible to borrow
standard ideas and methods of linear analysis and apply them to a
new area.

Consider a classical dynamical system specified by the Hamiltonian
$$
   H = H(p,x) = \sum_{i=1}^N \frac{p^2_i}{2m_i} + V(x),
$$
where $x = (x_1, \dots, x_N)$ are generalized coordinates, $p =
(p_1, \dots, p_N)$ are generalized momenta, $m_i$ are generalized
masses, and $V(x)$ is the potential. In this case the Lagrangian
$L(x, \dot x, t)$ has the form
$$
   L(x, \dot x, t)
    = \sum^N_{i=1} m_i \frac{\dot x_i^2}2 - V(x),
$$
where $\dot x = (\dot x_1, \dots, \dot x_N)$, $\dot x_i = dx_i /
dt$. The value function $S(x,t)$ of the action functional has the
form
$$
   S = \int^t_{t_0} L(x(t), \dot x(t), t)\, dt,
$$
where the integration is performed along the factual trajectory of
the system.  The classical equations of motion are derived as the
stationarity conditions for the action functional (the Hamilton
principle, or the least action principle).

For fixed values of $t$ and $t_0$ and arbitrary trajectories
$x(t)$, the action functional $S=S(x(t))$ can be considered as a
function taking the set of curves (trajectories) to the set of
real numbers which can be treated as elements of  $\rmin$. In this
case the minimum of the action functional can be viewed as the
Maslov integral of this function over the set of trajectories or
an idempotent analog of the Euclidean version of the Feynman path
integral. The minimum of the action functional corresponds to the
maximum of $e^{-S}$, i.e. idempotent integral
$\int^{\oplus}_{\{paths\}} e^{-S(x(t))} D\{x(t)\}$ with respect to
the max-plus algebra $\rset_{\max}$. Thus the least action
principle can be considered as an idempotent version of the
well-known Feynman approach to quantum mechanics.  The
representation of a solution to the Schr{\"o}\-din\-ger equation
in terms of the Feynman integral corresponds to the
Lax--Ole\u{\i}nik solution formula for the Hamilton--Jacobi
equation.

Since $\partial S/\partial x_i = p_i$, $\partial S/\partial t =
-H(p,x)$, the following Hamilton--Jacobi equation holds:
$$
   \pd{S}{t} + H \left(\pd{S}{x_i}, x_i\right)= 0.\eqno{(3)}
$$

Quantization leads to the Schr\"odinger equation
$$
   -\frac{\hbar}i \pd{\psi}{t}= \widehat H \psi = H(\hat p_i, \hat x_i)\psi,
   \eqno{(4)}
$$
where $\psi = \psi(x,t)$ is the wave function, i.e., a
time-dependent element of the Hilbert space $L^2(\rset^N)$, and
$\widehat H$ is the energy operator obtained by substitution of
the momentum operators $\widehat p_i = {\hbar \over i}{\partial
\over \partial x_i}$ and the coordinate operators $\widehat x_i
\colon \psi \mapsto x_i\psi$ for the variables $p_i$ and $x_i$ in
the Hamiltonian function, respectively. This equation is linear in
the conventional sense (the quantum superposition principle). The
standard procedure of limit transition from the Schr\"odinger
equation to the Hamilton--Jacobi equation is to use the following
ansatz for the wave function:  $\psi(x,t) = a(x,t)
e^{iS(x,t)/\hbar}$, and to keep only the leading order as $\hbar
\to 0$ (the `semiclassical' limit).

Instead of doing this, we switch to imaginary values of the Planck
constant $\hbar$ by the substitution $h = i\hbar$, assuming $h >
0$. Thus the Schr\"odinger equation~(4) turns to an analog of the
heat equation:
$$
   h\pd{u}{t} = H\left(-h\frac{\partial}{\partial x_i}, \hat x_i\right) u,
   \eqno{(5)}
$$
where the real-valued function $u$ corresponds to the wave
function $\psi$. A similar idea (the switch to imaginary time) is
used in the Euclidean quantum field theory; let us remember that
time and energy are dual quantities.

Linearity of equation~(4) implies linearity of equation~(5). Thus
if $u_1$ and $u_2$ are solutions of~(5), then so is their linear
combination
$$
   u = \lambda_1 u_1 + \lambda_2 u_2.\eqno{(6)}
$$

Let $S = h \ln u$ or $u = e^{S/h}$ as in Section 2 above. It can
easily be checked that equation~(5) thus turns to
$$
   \pd{S}{t}= V(x) + \sum^N_{i=1} \frac1{2m_i}\left(\pd{S}{x_i}\right)^2
    + h\sum^n_{i=1}\frac1{2m_i}\frac{\partial^2 S}{\partial x^2_i}.
   \eqno{(7)}
$$
Thus we have a transition from (4) to (7) by means of the change of
variables $\psi = e^{S/h}$. Note that $|\psi| = e^{ReS/h}$ , where
Re$S$ is the real part  of $S$. Now let us consider $S$ as a real
variable. The equation (7) is nonlinear in the conventional sense.
However, if $S_1$ and $S_2$ are its solutions, then so is the
function
$$
   S = \lambda_1 \odot S_1 \opl_h \lambda_2\odot S_2
$$
obtained from~(6) by means of our substitution $S = h \ln u$. Here
the generalized multiplication $\odot$ coincides with the ordinary
addition and the generalized addition $\opl_h$ is the image of the
conventional addition under the above change of variables.  As $h
\to 0$, we obtain the operations of the idempotent semiring
$\rmax$, i.e., $\oplus = \max$ and $\odot = +$, and equation~(7)
turns to the Hamilton--Jacobi equation~(3), since the third term
in the right-hand side of equation~(7) vanishes.

Thus it is natural to consider the limit function $S = \lambda_1
\odot S_1 \oplus \lambda_2 \odot S_2$ as a solution of the
Hamilton--Jacobi equation and to expect that this equation can be
treated as linear over $\rmax$. This argument (clearly, a
heuristic one) can be extended to equations of a more general
form. For a rigorous treatment of (semiring) linearity for these
equations see, e.g., \cite{KoMa97,LiMa2005}. Notice that if
$h$ is changed to $-h$, then we have that the resulting
Hamilton--Jacobi equation is linear over $\rmin$.

The idempotent superposition principle indicates that there exist
important nonlinear (in the traditional sense) problems that are
linear over idempotent semirings. The idempotent linear functional
analysis (see~\cite{KoMa97,Li2011,LiMa2005,LiMaSh98,LiMaSh99,LiMaSh2001,LiMaSh2002,LiSh2002,LiSh2007,MasSam2002,CoGaQu2004,GM:10})
is a natural tool for investigation of those nonlinear infinite-dimensional problems that possess this
property. In practice infinite-dimensional problems can be approximated by finite-dimensional problems.
So algorithms of idempotent linear algebras are especially important (because of the superposition principle!).
Below some examples are examined.

\medskip

\section{Applications}

There are very many important applications of tropical/idempotent mathematics (and especially the correspondence and
superposition principles) including optimization and control, algebraic geometry, dynamic programming,
differential equations, mathematical biology, mathematical physics and chemistry, transport and energoenergetic netwoks, interval analysis, mathematical
economics, game theory, computer technology etc., see, e.g.~\cite{Ca71,Ca79,Gun98a,KoMa97,Li2005,LiMa95,LiMa96,LiMa2005,LiSer2007,LiMaRS2011,LRSS,LiSer2009,
Mas86,Mas87a,MasSam2002,Mi2005,Mi2006,NGVR,Vir2000,Vir2008,Vir2010}. Applications of the idempotent correspondence principles to software and hardware
design are examined, e.g. in \cite{LiMa95,LiMa96,LiMaRS2011}. Some applications are discucced in~\cite{LMRS,LiSer2007}.
\medskip

\section{Positive semirings and basic operations}

\subsection{Some definitions}

Let the semiring~$S$ be partially ordered (see, e.g. \cite{Bir,Gol99} and Subsection 9.2 below) by a relation~$\preceq$ such that $\0$ is
the least element and the inequality $x \preceq y$ implies that $x \oplus z
\preceq y \oplus z$, $x \odot z \preceq y \odot z$, and~$z \odot x \preceq z \odot
y$ for all $x, y, z \in S$; in this case the semiring~$S$ is called
\emph{positive} (see, e.g., \cite{Gol99}).

Recall that a semiring~$S$ is called \emph{idempotent} if $x \oplus x = x$ for all
$x \in S$. 
In this case the
addition~$\oplus$ defines a \emph{canonical (or standard)
partial order}~$\preceq$ on the semiring~$S$ by the rule: $x\preceq y$ iff $x\oplus y=y$. It is easy to prove that any idempotent semiring is
positive with respect to this order. Note also that $x \oplus y =
\sup\{x,y\}$ with respect to the canonical order. In the sequel, we shall
assume that all idempotent semirings are ordered by the canonical partial
order relation.

We shall say that a positive (e.g., idempotent) semiring $S$ is \emph{complete} if it is complete  as an ordered set. This means that for every subset $T\subset S$ there exist elements $\sup T\in S$ and $\inf T\in S$.

The most well-known and important examples of positive semirings are
``numerical'' semirings consisting of (a subset of) real numbers and ordered
by the usual linear order $\leq$ on~$\rset$: the semiring~$\rset_+$
with the usual operations $\oplus = +$, $\odot = \cdot$ and neutral
elements $\0 = 0$, $\1 = 1$, the semiring~$\rmax = \rset \cup \{-\infty\}$
with the operations $\oplus = \max$, $\odot = +$ and neutral elements $\0 =
-\infty$, $\1 = 0$, the semiring $\rmaxh = \rmax \cup \{\infty\}$,
where $x \preceq \infty$, $x \oplus \infty = \infty$ for all $x$, $x \odot
\infty = \infty \odot x = \infty$ if $x \neq \0$, and $\0 \odot \infty =
\infty \odot \0$, the semirings $\zmax=\Z\cup\{-\infty\}$ and
$\zmaxh=\zmax\cup\{+\infty\}$ (subsemirings in $\rmax$ and $\rmaxh$),
and the semiring~$\smaxmin^{[a,b]} = [a, b]$, where
$-\infty \leq a < b \leq +\infty$, with the operations $\oplus =
\max$, $\odot = \min$ and neutral elements $\0 = a$, $\1 = b$.  The
semirings~$\rmax$, $\rmaxh$, $\zmax$, $\zmaxh$ and~$\smaxmin^{[a,b]} = [a, b]$ are
idempotent. The semirings $\rmaxh$, $\zmaxh$,  $\smaxmin^{[a,b]}$, $\widehat{\textbf{R}}_+=\textbf{R}_+\bigcup\{\infty\}$ are complete.
Remind that every partially ordered set can be imbedded to its completion (a minimal complete set containing the initial one).
We shall say that all these semirings (as well as algebras isomorphic to them) are {\it basic numerical semiring}. These semirings are complete or their completions are complete semirings.

Denote by $\Mat_{mn}(S)$ a set of all matrices $A = (a_{ij})$
with $m$~rows and $n$~columns whose coefficients belong to a semiring~$S$.
The sum $A \oplus B$ of matrices $A, B \in \Mat_{mn}(S)$ and the product
$AB$ of matrices $A \in \Mat_{lm}(S)$ and $B \in \Mat_{mn}(S)$ are defined
according to the usual rules of linear algebra:
$A\oplus B=(a_{ij} \oplus b_{ij})\in \mathrm{Mat}_{mn}(S)$ and
$$
AB=\left(\bigoplus_{k=1}^m a_{ij}\odot b_{kj}\right)\in\Mat_{ln}(S),
$$
where $A\in \Mat_{lm}(S)$ and $B\in\Mat_{mn}(S)$.
Note that we write $AB$ instead of $A\odot B$.

If the semiring~$S$ is
positive, then the set $\Mat_{mn}(S)$ is ordered by the relation $A =
(a_{ij}) \preceq B = (b_{ij})$ iff $a_{ij} \preceq b_{ij}$ in~$S$ for all $1
\leq i \leq m$, $1 \leq j \leq n$.

The matrix multiplication is consistent with the order~$\preceq$ in the
following sense: if $A, A' \in \Mat_{lm}(S)$, $B, B' \in \Mat_{mn}(S)$ and
$A \preceq A'$, $B \preceq B'$, then $AB \preceq A'B'$ in $\Mat_{ln}(S)$. The set
$\Mat_{nn}(S)$ of square $(n \times n)$ matrices over a [positive,
idempotent] semiring~$S$ forms a [positive, idempotent] semiring with a
zero element $O = (o_{ij})$, where $o_{ij} = \0$, $1 \leq i, j
\leq n$, and a unit element $I = (\delta_{ij})$, where $\delta_{ij} =
\1$ if $i = j$ and $\delta_{ij} = \0$ otherwise.

The set $\Mat_{nn}$ is an example of a noncommutative semiring if $n>1$.


\subsection{Closure operations}

Let a positive semiring~$S$ be endowed with a partial unary \emph{closure
operation or Kleene star operation}~$*$ such that $a \preceq b$ implies $a^* \preceq b^*$ and $a^* = \1
\oplus (a^* \odot a) = \1 \oplus (a \odot a^*)$ on its domain of
definition. In particular, $\0^* = \1$ by definition.

These axioms imply
that $a^* = \1 \oplus a \oplus a^2 \oplus \dots \oplus (a^* \odot a^n)$ if
$n \geq 1$. Thus $x^*$ can be considered as a `regularized sum' of the
series $a^* = \1 \oplus a \oplus a^2 \oplus \dots$.  In a positive
semiring, provided that it is closed under taking bounded ordered sup-operations
and the operations $\oplus$ and $\odot$ distribute over such sup-operations
we can define
\begin{equation}
\label{cldef-gen}
a^*:=\sup_{k\geq 0} \1\oplus a\oplus \ldots\oplus a^k,
\end{equation}
if the sequence on the r.h.s. is bounded. In this case $a^*$ is the {\bf least solution} of the equations $x=ax\oplus\1$ and $x=xa\oplus\1$, and more generally $a^*b$ is the
the least solution
of the {\it Bellman equations} $x=ax\oplus b$ and $x=xa\oplus b$. So if $S$ is complete, then the closure operation is well-defined for every element $x\in S$.

In the case of idempotent addition~\eqref{cldef-gen}
becomes particularly nice:
\begin{equation}
\label{aldef-id}
a^*=\bigoplus_{i\geq 0} a^i=\sup_{i\geq 0} a^i.
\end{equation}

In numerical semirings the operation~$*$ is usually very easy to implement:
$x^* = (1-x)^{-1}$ if $x<1$ in $\rset_+$, or $\widehat{\textbf{R}}_+$ and $x^*=\infty$ if $x\geq 1$ in $\widehat{\textbf{R}}_+$; $x^* = \1$ if $x \preceq \1$ in $\rmax$
and $\rmaxh$, $x^* = \infty$ if $x \succ \1$ in $\rmaxh$, $x^* = \1$
for all $x$ in $\smaxmin^{[a,b]}$. In all other cases $x^*$ is undefined.

The closure operation in matrix semirings over a positive semiring~$S$ can
be defined inductively: $A^*
= (a_{11})^* = (a^*_{11})$ in $\Mat_{11}(S)$ and for any integer $n > 1$
and any matrix
$$
   A = \begin{pmatrix} A_{11}& A_{12}\\ A_{21}& A_{22} \end{pmatrix},
$$
where $A_{11} \in \Mat_{kk}(S)$, $A_{12} \in \Mat_{k\, n - k}(S)$,
$A_{21} \in \Mat_{n - k\, k}(S)$, $A_{22} \in \Mat_{n - k\, n - k}(S)$,
$1 \leq k \leq n$, by defintion,
\begin{equation}
\label{A_Star}
   A^* = \begin{pmatrix}
   A^*_{11} \oplus A^*_{11} A_{12} D^* A_{21} A^*_{11} &
   \quad A^*_{11} A_{12} D^* \\[2ex]
   D^* A_{21} A^*_{11} &
   D^*
   \end{pmatrix},
\end{equation}
where $D = A_{22} \oplus A_{21} A^*_{11} A_{12}$. It can be proved that
this definition of $A^*$ implies that the equalities
$A^* = A^*A \oplus I=AA^*\oplus I$ are
satisfied,  and thus $A^*$ is a `regularized sum' of the series $I \oplus A
\oplus A^2 \oplus \dots$. Moreover, in the case when $A^*$ is defined as the least solution of $A^* = A^*A \oplus I$ and $A^* = AA^* \oplus I$, it can be shown
that it satisfies~\eqref{A_Star}.

Note that this recurrence
relation coincides with the formulas of escalator method of matrix
inversion in the traditional linear algebra over the field of real or
complex numbers, up to the algebraic operations used. Hence this algorithm
of matrix closure requires a polynomial number of operations in~$n$, see~\cite{Ca79,GM:10,LiMaRS2011,LRSS,LS-00,LS-01} for more details.

Let~$S$ be a positive semiring. The \emph{discrete stationary
Bellman equation or matrix Bellman equation} has the form
\begin{equation}
\label{AX}
	X = AX \oplus B,
\end{equation}
where $A \in \Mat_{nn}(S)$, $X, B \in \Mat_{ns}(S)$, and the matrix~$X$ is
unknown. Let $A^*$ be the closure of the matrix~$A$. It follows from the
identity $A^* = A^*A \oplus I$ that the matrix $A^*B$ satisfies this
equation. As in the scalar case, it
can be shown that for positive semirings under reasonable distributivity assumptions, if $A^*$ is defined as in~\eqref{cldef-gen} then $A^*B$ is the least in the set of solutions to equation~\eqref{AX}
with respect to the partial order in $\Mat_{ns}(S)$. Recall that in the
idempotent case
\begin{equation}
\label{aldef-matx}
A^*=\bigoplus_{i\geq 0} A^i=\sup_{i\geq 0} A^i.
\end{equation}

Consider also the case when $A=(a_{ij})$ is $n\times n$ {\em strictly upper-triangular}
(such that $a_{ij}=\0$ for $i\geq j$), or $n\times n$ {\em strictly lower-triangular}
(such that $a_{ij}=\0$ for $i\leq j$). In this case $A^n=O$, the all-zeros matrix,
and it can be shown by iterating $X=AX\oplus I$ that this equation has a unique
solution, namely
\begin{equation}
\label{a*nilp}
A^*=I\oplus A\oplus\ldots\oplus A^{n-1}.
\end{equation}
Curiously enough, formula~\eqref{a*nilp} works more generally in the
case of numerical idempotent semiring $\rmax$ (and other idempotent semirings):
in fact, the series~\eqref{aldef-matx}
converges there if and only if it can be truncated to~\eqref{a*nilp}. This
is closely related to the principal path interpretation of $A^*$ explained in
the next subsection.

\subsection{Weighted directed graphs and matrices over semirings}

Suppose that $S$ is a semiring with zero~$\0$ and unity~$\1$. It is well-known
that any square matrix $A = (a_{ij}) \in \Mat_{nn}(S)$ specifies a
\emph{weighted directed graph}. This geometrical construction includes
three kinds of objects: the set $X$ of $n$ elements $x_1, \dots, x_n$
called \emph{nodes}, the set $\Gamma$ of all ordered pairs $(x_i, x_j)$
such that $a_{ij} \neq \0$ called \emph{arcs}, and the mapping $A \colon
\Gamma \to S$ such that $A(x_i, x_j) = a_{ij}$. The elements $a_{ij}$ of
the semiring $S$ are called \emph{weights} of the arcs.

Conversely, any given weighted directed graph with $n$ nodes specifies a
unique matrix $A \in \Mat_{nn}(S)$.

This definition allows for some pairs of nodes to be disconnected if the
corresponding element of the matrix $A$ is $\0$ and for some channels to be
``loops'' with coincident ends if the matrix $A$ has nonzero diagonal
elements. This concept is convenient for analysis of parallel and
distributed computations and design of computing media and networks.

Recall that a sequence of nodes of the form
$$
	p = (y_0, y_1, \dots, y_k)
$$
with $k \geq 0$ and $(y_i, y_{i + 1}) \in \Gamma$, $i = 0, \dots, k -
1$, is called a \emph{path} of length $k$ connecting $y_0$ with $y_k$.
Denote the set of all such paths by $P_k(y_0,y_k)$. The weight $A(p)$ of a
path $p \in P_k(y_0,y_k)$ is defined to be the product of weights of arcs
connecting consecutive nodes of the path:
$$
	A(p) = A(y_0,y_1) \odot \cdots \odot A(y_{k - 1},y_k).
$$
By definition, for a `path' $p \in P_0(x_i,x_j)$ of length $k = 0$ the
weight is $\1$ if $i = j$ and $\0$ otherwise.

For each matrix $A \in \Mat_{nn}(S)$ define $A^0 = I = (\delta_{ij})$
(where $\delta_{ij} = \1$ if $i = j$ and $\delta_{ij} = \0$ otherwise) and
$A^k = AA^{k - 1}$, $k \geq 1$.  Let $a^{[k]}_{ij}$ be the $(i,j)$th
element of the matrix $A^k$. It is easily checked that
$$
   a^{[k]}_{ij} =
   \bigoplus_{\substack{i_0 = i,\, i_k = j\\
	1 \leq i_1, \ldots, i_{k - 1} \leq n}}
	a_{i_0i_1} \odot \dots \odot a_{i_{k - 1}i_k}.
$$
Thus $a^{[k]}_{ij}$ is the supremum of the set of weights corresponding to
all paths of length $k$ connecting the node $x_{i_0} = x_i$ with $x_{i_k} =
x_j$.

Let $A^*$ be defined as in~\eqref{aldef-matx}.
Denote the elements of the matrix $A^*$ by $a^{*}_{ij}$, $i, j = 1,
\dots, n$; then
$$
	a^{*}_{ij}
	= \bigoplus_{0 \leq k < \infty}
	\bigoplus_{p \in P_k(x_i, x_j)} A(p).
$$

The closure matrix $A^*$ solves the well-known \emph{algebraic path
problem}, which is formulated as follows: for each pair $(x_i,x_j)$
calculate the supremum of weights of all paths (of arbitrary length)
connecting node $x_i$ with node $x_j$. The closure operation in matrix
semirings has been studied extensively (see, e.g.,
\cite{BCOQ,Ca79,CG:79,Gol99,GM:79,GM:10,Leh-77,KoMa97,LS-01} and references therein).

\begin{example}[The shortest path problem]
{\rm Let $S = \rmin$, so the weights are real numbers. In this case
$$
	A(p) = A(y_0,y_1) + A(y_1,y_2) + \dots + A(y_{k - 1},y_k).
$$
If the element $a_{ij}$ specifies the length of the arc $(x_i,x_j)$ in some
metric, then $a^{*}_{ij}$ is the length of the shortest path connecting
$x_i$ with $x_j$.}
\end{example}

\begin{example}[The maximal path width problem]
{\rm Let $S = \rset \cup \{\0,\1\}$ with $\oplus = \max$, $\odot = \min$. Then
$$
	a^{*}_{ij} =
	\max_{p \in \bigcup\limits_{k \geq 1} P_k(x_i,x_j)} A(p),
	\quad
	A(p) = \min (A(y_0,y_1), \dots, A(y_{k - 1},y_k)).
$$
If the element $a_{ij}$ specifies the ``width'' of the arc
$(x_i,x_j)$, then the width of a path $p$ is defined as the minimal
width of its constituting arcs and the element $a^{*}_{ij}$ gives the
supremum of possible widths of all paths connecting $x_i$ with $x_j$.}
\end{example}

\begin{example}[A simple dynamic programming problem]
{\rm Let $S = \rmax$ and suppose $a_{ij}$ gives the \emph{profit} corresponding
to the transition from $x_i$ to $x_j$. Define the vector $B  = (b_i) \in
\Mat_{n1}(\rmax)$ whose element $b_i$ gives the \emph{terminal profit}
corresponding to exiting from the graph through the node $x_i$. Of course,
negative profits (or, rather, losses) are allowed. Let $m$ be the total
profit corresponding to a path $p \in P_k(x_i,x_j)$, i.e.
$$
	m = A(p) + b_j.
$$
Then it is easy to check that the supremum of profits that can be achieved
on paths of length $k$ beginning at the node $x_i$ is equal to $(A^kB)_i$
and the supremum of profits achievable without a restriction on the length
of a path equals $(A^*B)_i$.}
\end{example}

\begin{example}[The matrix inversion problem]
{\rm Note that in the formulas of this section we are using distributivity of
the multiplication $\odot$ with respect to the addition $\oplus$ but do not
use the idempotency axiom. Thus the algebraic path problem can be posed for
a nonidempotent semiring $S$ as well (this is well-known}). For
instance, if $S = \rset$, then
$$
	A^* = I + A + A^2 + \dotsb = (I - A)^{-1}.
$$
If $\|A\| > 1$ but the matrix $I - A$ is invertible, then this expression
defines a regularized sum of the divergent matrix power series
$\sum_{i \geq 0} A^i$.
\end{example}

We emphasize that this connection between the matrix closure operation and
solution to the Bellman equation gives rise to a number of different
algorithms for numerical calculation of the closure matrix. All these
algorithms are adaptations of the well-known algorithms of the traditional
computational linear algebra, such as the Gauss--Jordan elimination, various
iterative and escalator schemes, etc. This is a special case of the idempotent superposition principle.

In fact, the theory of the discrete stationary Bellman equation can be
developed using the identity $A^* = AA^* \oplus I$ as an additional axiom
without any substantive interpretation (the so-called \emph{closed
semirings}, see, e.g., \cite{Gol99,Leh-77}. Such theory can be based
on the following identities, true both for the case of idempotent
semirings with path interpretation, and the real numbers with
conventional arithmetic (assumed that $A$ and $B$ have appropriate sizes):
\begin{equation}
\label{e:conway}
\begin{split}
(A\oplus B)^*&=(A^*B)^*A^*,\\
(AB)^*A&=A(BA)^*.
\end{split}
\end{equation}

\subsection{Basic operations}

Let $S$ be an idempotent or positive semiring. Then $S$ is a partial ordered set (or poset) with respect to the canonical order.
Suppose that $S$ is a lattice, i.e. for each pair of elements $x,y$, there exists
the least lower bound $x\vee y$ called {\rm supremum} and the greatest lower bound
$x\wedge y$ called {\rm infimum}. See details in Subsection 9.1 below. For basic numerical semirings (see Subsection 7.2 above) these operations are maximum and minimum.

We shall say that the semiring operations, supremum, infinum and the unary closure operation (Kleene star-operation) are {\it basic operations}.

We shall say that $S$ is a {\it completed semifield} if $S$ is a complete semiring and $S$ without the element $\sup S$ is a semifield. Then the
(unary) inversion  $x\mapsto x^{-1}$ is obviously well defined for every element. If $S$ is a semifield or completed semifield, then we shall say that the inversion operation is also basic.

For basic numerical positive semirings all the basic operations are very simple and easy for computer implementations.

\section{Algorithms of tropical/idempotent mathematics}

\subsection{The correspondence principle for computations}

Of course, the idempotent correspondence principle is valid for
algorithms as well as for their software and hardware implementations~\cite{LiMa95,LiMa96,LiMa98,LMR-00,LiMaRS2011,LRSS}.
Thus:

\begin{quote}
{\it If we have an important and interesting numerical algorithm, then
there is a good chance that its semiring analogs are important and
interesting as well.}
\end{quote}

In particular, according to the superposition principle,
analogs of linear
algebra algorithms are especially important. Note that
numerical algorithms
for standard infinite-dimensional linear problems over idempotent
semirings (i.e., for
problems related to idempotent integration, integral operators and
transformations, the Hamilton-Jacobi and generalized Bellman equations)
deal with the corresponding finite-dimensional (or finite) ``linear
approximations''. Nonlinear algorithms often can be approximated by linear
ones. Thus the idempotent linear algebra is a basis for the idempotent
numerical analysis.

Moreover, it is well-known that linear algebra algorithms easily lend themselves to parallel computation; their idempotent analogs admit
parallelization as
well. Thus we obtain a systematic way of applying parallel computing to
optimization problems. In this paper we do not deal with parallel algorithms and their implementations.


\subsection{Universal algorithms}

Computational algorithms are constructed
on the basis of certain primitive operations. These operations manipulate
data that describe ``numbers.'' These ``numbers'' are elements of a
``numerical domain,'' i.e., a mathematical object such as the field of
real numbers, the ring of integers, numerical and idempotent semirings and semifields.

In practice, elements of the numerical domains are replaced
by their computer representations, i.e., by elements of certain finite
models of these domains. Examples of models that can be conveniently used
for computer representation of real numbers are provided by various
modifications of floating point arithmetics, approximate arithmetics of
rational numbers~\cite{LRTch-08}, and interval arithmetics. The difference
between mathematical objects (``ideal'' numbers) and their finite
models (computer representations) results in computational (e.g.,
rounding) errors.

An algorithm is called {\it universal\/} if it is independent of a
particular numerical domain and/or its computer representation.
A typical example of a universal algorithm is the computation of the
scalar product $(x,y)$ of two vectors $x=(x_1,\dots,x_n)$ and
$y=(y_1,\dots,y_n)$ by the formula $(x,y)=x_1y_1+\dots+x_ny_n$.
This algorithm (formula) is independent of a particular
domain and its computer implementation, since the formula is
well-defined for any semiring. It is clear that one algorithm can be
more universal than another. For example, the simplest Newton--Cotes formula, the
rectangular rule, provides the most universal algorithm for
numerical integration. In particular, this formula is valid also for
idempotent integration (that is, over any idempotent semiring, see  e.g.
\cite{KoMa97,Li2005,Mas86,Mas87a,Mas87b}.
Other quadrature formulas (e.g., combined trapezoid rule or the Simpson
formula) are independent of computer arithmetics and can be
used (e.g., in the iterative form) for computations with
arbitrary accuracy. In contrast, algorithms based on
Gauss--Jacobi formulas are designed for fixed accuracy computations:
they include constants (coefficients and nodes of these formulas)
defined with fixed accuracy. (Certainly, algorithms of this type can
be made more universal by including procedures for computing the
constants; however, this results in an unjustified complication of the
algorithms.)

Modern achievements in software development
and mathematics make us consider numerical algorithms and their
classification from a new point of view. Conventional numerical
algorithms are oriented to software (or hardware) implementation based
on floating point arithmetic and fixed accuracy. However,
it is often desirable to perform computations with variable (and
arbitrary) accuracy. For this purpose, algorithms are required
that are independent of the accuracy of computation and of the
specific computer representation of numbers. In fact, many
algorithms are independent not only of the computer representation
of numbers, but also of concrete mathematical (algebraic) operations
on data. In this case, operations themselves may be considered as variables.
Such algorithms are implemented in the form of {\it generic programs} based on
abstract data types that are
defined by the user in addition to the predefined types provided by the
language. The corresponding program tools appeared as
early as in Simula-67, but modern object-oriented languages (like
$C^{++}$, see, e.g., \cite{Lor:93,Pohl:97}) are more convenient for generic programming. Computer algebra algorithms used in such systems as Mathematica,
Maple, REDUCE, and others are also highly universal.

A different form of universality is featured by
iterative algorithms (beginning with the successive approximation
method) for solving differential equations (e.g., methods of
Euler, Euler--Cauchy, Runge--Kutta, Adams, a number of important
versions of the difference approximation method, and the like),
methods for calculating elementary and some special functions based on
the expansion in Taylor's series and continuous fractions
(Pad\'e approximations). These algorithms are independent of the computer
representation of numbers.

The concept of a generic program was introduced by many authors;
for example, in~\cite{Leh-77} such programs were called `program schemes.'
In this paper, we discuss  universal algorithms implemented in the form of generic
programs.

\subsection{The correspondence principle for hardware design}

A systematic application of the correspondence principle to computer
calculations leads to a unifying approach to software and hardware
design.

The most important and standard numerical algorithms have many hardware
realizations in the form of technical devices or special processors.
{\it These devices often can be used as prototypes for new hardware
units generated by substitution of the usual arithmetic operations
for its semiring analogs and by addition tools for performing neutral
elements $\0$ and} $\1$ (the latter usually is not difficult). Of course,
the case of numerical semirings consisting of real numbers (maybe except
neutral elements)  and semirings of numerical intervals is the most simple and natural .
Note that for semifields (including $\rmax$ and $\rmin$)
the operation of division is also defined.

Good and efficient technical ideas and decisions can be transposed
from prototypes into new hardware units. Thus the correspondence
principle generated a regular heuristic method for hardware design.
Note that to get a patent it is necessary to present the so-called
`invention formula', that is to indicate a prototype for the suggested
device and the difference between these devices~\cite{LiMa95,LiMa96,LiMa98,LMR-00,LiMaRS2011,LRSS}.

Consider (as a typical example) the most popular and important algorithm
of computing the scalar product of two vectors:
\begin{equation}
(x,y)=x_1y_1+x_2y_2+\cdots + x_ny_n.
\end{equation}
The universal version of (12) for any semiring $A$ is obvious:
\begin{equation}
(x,y)=(x_1\odot y_1)\oplus(x_2\odot y_2)\oplus\cdots\oplus
(x_n\odot y_n).
\end{equation}
In the case $A=\rmax$ this formula turns into the following one:
\begin{equation}
(x,y)=\max\{ x_1+y_1,x_2+y_2, \cdots, x_n+y_n\}.
\end{equation}

This calculation is standard for many optimization algorithms, so
it is useful to construct a hardware unit for computing (14). There
are many different devices (and patents) for computing (12) and every
such device can be used as a prototype to construct a new device for
computing (14) and even (13). Many processors for matrix multiplication
and for other algorithms of linear algebra are based on computing
scalar products and on the corresponding ``elementary'' devices
respectively, etc.


\subsection{The correspondence principle for software design}

Software implementations for universal semiring algorithms are not
as efficient as hardware ones (with respect to the computation speed)
but they are much more flexible. Program modules can deal with abstract (and
variable) operations and data types. Concrete values for these
operations and data types can be defined by the corresponding
input data. In this case concrete operations and data types are generated
by means of additional program modules. For programs written in
this manner it is convenient to use special techniques of the
so-called object oriented (and functional) design, see, e.g.,~\cite{Lor:93,Pohl:97}. Fortunately, powerful tools supporting the
object-oriented software design have recently appeared including compilers
for real and convenient programming languages (e.g. $C^{++}$ and Java) and modern computer algebra systems.

Recently, this type of programming technique has been dubbed the so-called
generic programming (see, e.g., \cite{SL:94}). To help automate the
generic programming, the so-called Standard Template Library (STL)
was developed in the framework of $C^{++}$ \cite{Pohl:97,SL:94}. However,
high-level tools, such as STL, possess both obvious advantages
and some disadvantages and must be used with caution.

\subsection{Complexity of algorithms in idempotent mathematics}

We shall use the well known standard terminology of the complexity theory (time complexity, space complexity, asymptotic computational complexity,
polynomial complexity, NP-hard problems etc.), see, e.g., the corresponding Wikipedia articles. The time complexity of an algorithm quantifies the
amount of time taken by this algorithm to run as a function of the size of the input to the problem. The time complexity of an algorithm
is commonly expressed using big O notation, which suppresses multiplicative constants and lower order terms. When expressed this way, the time complexity
is said to be described asymptotically, i.e., as the input size goes to infinity. The time complexity is commonly estimated by counting the number of elementary operations performed by the algorithm. {\it In idempotent mathematics (and mathematics over positive semirings) the elementary operations are basic
operations described in Subsection 7.4 above}.

For the space complexty, the situation is quite similar. The following definition is important for us. We shall say that two algorithms have the {\it same complexity} if they have the same asymptotic time and space complexity.

In principle idempotent mathematcs and its algorithms are more simple with respect to traditional mathematics. That is why the most important algorithms
of idempotent mathematics (and espcially idempotent linear algebra) are polynomial. For example, many algorithms of solving the stationary discrete (matrix)
Bellman equations have the complexity $O(n^3)$, see~\cite{Ca71,Ca79,GM:79,GM:10,LiMaRS2011,LMa-00,LS-00,LS-01,LRSS}. Many other polynomial
algorithms of linear algebra are examined in~\cite{BCOQ,But:10,BZ,CG:79,Gr,GP,Har+09,LMRS,T}. However, NP-hard problems exist (e.g., in tropical and idempotent
linear algebra), see~\cite{Gr,GP,LMRS,Sh}. Some frontiers of polynomial computations in tropical geometry are investigated in~\cite{T}.

\section{Interval analysis in idempotent mathematics and complexity of algorithms}

\subsection{Interval extensions of algorithms}

Interval analysis appears for treating input and output data with  uncertainties (interval data).
Traditional interval analysis is a nontrivial and popular mathematical area, see, e.g.,~\cite{AH:83,Fie+06,Kre+98,Moo:79,Neu:90}. An ``idempotent'' version of interval analysis (and moreover interval analysis over positive semirings) appeared in~\cite{LS-00,LS-01,Sob-99}. Rather many publications on the subject appeared later, see, e.g.,~\cite{CC-02,Fie+06,Har+09,Mys-05,Mys-06}. Interval analysis over the positive semiring $\textbf{R}_+$ was discussed in~\cite{BN-74}.
In the framework of idempotent mathematics, interval analysis gives exact interval solutions without any conditions of smallness on uncertainty intervals.

Let a set~$S$ be partially ordered by a relation $\preceq$. Below (in Subsection 9.2 partially ordered sets (or posets for the sake of brevity)
will be discussed in details.
A \emph{closed interval} in~$S$ is a subset of the form $\x = [\podx, \nadx] =
\{\, x \in S \mid \podx \preceq x \preceq \nadx\, \}$, where the elements $\podx \preceq
\nadx$ are called \emph{lower} and \emph{upper bounds} of the interval $\x$.
The order~$\preceq$ induces a partial ordering on the set of all closed
intervals in~$S$: $\x \preceq \y$ iff $\podx \preceq \pody$ and $\nadx \preceq \nady$.

A \emph{weak interval extension} $I(S)$ of a positive semiring~$S$ is the
set of all closed intervals in~$S$ endowed with operations $\oplus$
and~$\odot$ defined as ${\x \oplus \y} = [{\podx \oplus \pody}, {\nadx \oplus
\nady}]$, ${\x \odot \y} = [{\podx \odot \pody}, {\nadx \odot \nady}]$ and a partial
order induced by the order in $S$. The closure operation in $I(S)$ is
defined by $\x^* = [\podx^*, \nadx^*]$. There are some other interval extensions (including the so-called strong interval extension~\cite{LS-01}) but the weak extension is more convenient.

The extension $I(S)$ is a positive semiring; $I(S)$ is idempotent if $S$ is an idempotent semiring.
A universal algorithm over $S$ can be applied to $I(S)$ and we shall get an interval version of the initial algorithm. However, there are some conditions
for interval extensions of algorithms to be sure that these extensions are good enough, see below.
Usually both the versions have the same complexity. For the discrete stationary Bellman equation and the corresponding optimization problems on graphs, interval analysis was examined in~\cite{LS-00,LS-01} in details. Other problems of idempotent linear algebra  were examined in~\cite{CC-02,Fie+06,Har+09,Mys-05,Mys-06}.

Idempotent mathematics appears to be remarkably simpler than its
traditional analog. For example, in traditional interval arithmetic,
multiplication of intervals is not distributive with respect to addition of
intervals, whereas in idempotent interval arithmetic this distributivity is
preserved. Moreover, in traditional interval analysis the set of all
square interval matrices of a given order does not form even a semigroup
with respect to matrix multiplication: this operation is not associative
since distributivity is lost in the traditional interval arithmetic. On the
contrary, in the idempotent (and positive) case associativity is preserved. Finally, in
traditional interval analysis some problems of linear algebra, such as
solution of a linear system of interval equations, can be very difficult
(more precisely, they are $NP$-hard, see~
\cite{Cox-99,Fie+06,Kre+98} and references therein). It was noticed  in~\cite{LS-00,LS-01} that in the idempotent case solving an interval linear system
requires a polynomial number of operations (similarly to the usual Gauss
elimination algorithm).  Two properties that make the idempotent interval
arithmetic so simple are monotonicity of arithmetic operations and
positivity of all elements of an idempotent semiring.

Usually interval estimates in idempotent mathematics are exact. In the traditional theory such estimates tend to be overly pessimistic.

\subsection{Intervals in partially ordered sets and interval regular mappings}

Let us start with some basic notions of the theory of lattices and partially ordered sets.
The reader is referred to \cite{Bir} for more information.

\begin{definition}
Binary relation $\preceq$ on a set $S$ is called a {\rm partial order} if it satisfies the following
axioms: 1) $a\leq a$, 2) $a\preceq b$ and $b\preceq a$ imply $a=b$, 3)
$a\preceq b$ and $b\preceq c$ imply $a\preceq c$. In this case $S$ is called a
{\rm partially ordered set} or, briefly, a {\rm poset}.
\end{definition}

If $S$ is a Cartesian product $S_1\times S_2$ where $S_1$ and $S_2$ are posets ordered
with $\preceq_1$ and $\preceq_2$ respectively,
one can naturally introduce relation $\preceq$ on $S$, by
$(x_1,y_1)\preceq (x_2,y_2) \Leftrightarrow x_1\preceq_1 x_2$ and $y_1\preceq_2 y_2$.

For the dynamics, consider mappings of partially ordered sets. A mapping
$\phi\colon S\to T$ is a morphism of partially ordered sets $S$ and $T$ if
$x\preceq y$ implies $\phi(x)\preceq\phi(y)$. This mapping is an isomorphism if it is
a bijection (i.e. one-to-one correspondence) between $S$ and $T$. Also note that if
$\phi_1$ is a morphism between $S_1$ and $T_1$, and $\phi_2$ is a morphism (resp.
isomorphism) between
$S_2$ and $T_2$, then mapping $\phi_1\times\phi_2\colon (x,y)\mapsto (\phi_1(x),\phi_2(y))$
 is a morphism (resp. isomorphism) between $S_1\times T_1$ and $S_2\times T_2$.

For a poset $S$ and a subset $X\subseteq S$,
an element $t\in S$ is called an {\em upper bound} (resp. a {\em lower bound})
of $X$ if $t\geq x$ (resp. $t\leq x$) for every $x\in X$.
\begin{definition}
A poset $S$ is called a lattice if for each pair of elements $x,y$, there exists
the least lower bound $x\vee y$ called {\rm supremum} and the greatest lower bound
$x\wedge y$ called {\rm infimum}.
\end{definition}

A lattice $S$ is {\em complete} if every subset $H\subseteq S$ (not necessarily finite) has supremum
and infimum in $S$, and it is {\em conditionally complete} if supremum exists for
each subset bounded from above, and infimum exists for each subset bounded from below.
Note that the last two statements are equivalent so that, formally, only one of them is needed.

\begin{example}
{\rm
Consider
the set of natural numbers $N$ ordered in such a way that $n_1\preceq n_2$ if and only if $n_1$ divides $n_2$. Then it can be verified
that $m\vee n$ is the least upper bound of $m$ and $n$ while $m\wedge n$ is their
greatest common divisor. Other related examples are the lattice of subsets ordered by
inclusion where $\wedge=\cap$ and $\vee=\cup$, or the lattice of convex sets ordered by inclusion
where $\wedge=\cap$ but $\vee$ is the convex hull of the arguments. However we emphasize that
such examples are not important to us here.}
\end{example}

\begin{example}
{\rm Any {\em linearly ordered} set $S$, i.e. such that for each $x,y\in S$ there is $x\preceq y$
or $y\preceq x$, is a lattice where both $x\vee y\in\{x,y\}$ and $x\wedge y\in\{x,y\}$.
For example we can take the real line $\R$ or any subset of the real line, e.g., an interval
$[a,b]$ or a set of nonnegative numbers $\R_+$. However, $\R$ and $\R_+$ are only conditionally
complete, and for their completion we can consider $\Hat{R}:=\R\cup\{-\infty\}\cup\{\infty\}$
and $\Hat{\R_+}:=\R_+\cup\{\infty\}$.}
\end{example}

From this one can construct slightly more
complicated examples, which will not be linearly ordered, e.g.,
by means of Cartesian products.
The following lattice-theoretic definition is of particular importance to us.

\begin{definition}
Let $S$ be a set partially ordered by a relation $\preceq$. A {\rm (closed) interval} in $S$ is
a subset of the form $\intx:=[\podx,\nadx]=\{t\in S\colon \podx\preceq t\preceq\nadx\}$,
where $\podx,\nadx\in S$ and $\podx\preceq\nadx$. The elements $\podx$ and $\nadx$
are called {\rm lower bound} and {\rm upper bound} of $x$ respectively.
\end{definition}
Intervals can be viewed as pairs of lower and upper bounds. The set of such pairs
is denoted by $I(S)$ and called {\em interval extension} of $S$.
It is evident that $I(S)\subseteq S\times S$ (but $I(S)\neq S\times S$) and that $I(S)$ is a poset.
It can be shown that
$I(S\times T)=I(S)\times I(T)$ for posets $S$ and $T$.

Consider an algorithm $\cA$ on posets. It takes input data $(x_1,\ldots, x_n)$ and
generates output $(y_1,\ldots, y_m)$. Here $x_i\in S_i$ for $i=1,\ldots,n$ and $y_j\in T_j$ for
$j=1,\ldots,m$, where $S_i$ and $T_j$ are posets. Hence, this algorithm induces a mapping
$\phi_{\alg}\colon S\to T$ where $S:=S_1\times\cdots\times S_n$ and
$T:=T_1\times\cdots\times T_m$. We call an algorithm $\cA$ {\em positive} or
{\em nondecreasing} if the corresponding mapping
$S_1\times\cdots\times S_n\to T_1\times\cdots\times T_m$ is nondecreasing.

\begin{proposition}
\label{p:main1}
If an algorithm $\cA$ is positive, then applied to the lower and upper
bounds of an interval of
$S_1\times\cdots\times S_n$ where $S_1,\ldots,S_n$ are posets,
 it yields exact interval
bounds on the application of $\alg$ to the whole interval.
\end{proposition}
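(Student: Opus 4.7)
The plan is to reduce the statement directly to the monotonicity of the mapping $\phi_\cA\colon S_1\times\cdots\times S_n\to T_1\times\cdots\times T_m$, interpreting every partial order componentwise via the product-order construction discussed just before the proposition. No computation is needed; the content is a simple propagation of order, in sharp contrast with traditional interval arithmetic over $\R$, where non-monotone sub-operations (e.g., subtraction or division of sign-indefinite quantities) can cause overshoot.

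First, I would fix an arbitrary point $x$ in the interval $[\podx,\nadx]\subseteq S_1\times\cdots\times S_n$, so that $\podx\preceq x\preceq\nadx$ in the product order. Applying the hypothesis that $\phi_\cA$ is nondecreasing yields
\[
\phi_\cA(\podx)\preceq \phi_\cA(x)\preceq \phi_\cA(\nadx)
\]
in $T_1\times\cdots\times T_m$. This gives the containment $\phi_\cA([\podx,\nadx])\subseteq [\phi_\cA(\podx),\phi_\cA(\nadx)]$, i.e., the interval produced by running $\cA$ on the two endpoints is a valid enclosure of the range of $\phi_\cA$ on the whole input interval.

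Next, I would verify tightness (this is what makes the bounds \emph{exact}, not merely valid). Since $\podx$ and $\nadx$ themselves lie in $[\podx,\nadx]$, the values $\phi_\cA(\podx)$ and $\phi_\cA(\nadx)$ are attained in the image $\phi_\cA([\podx,\nadx])$. Hence the componentwise infimum and supremum of the image over every coordinate $T_j$ coincide exactly with the respective components of $\phi_\cA(\podx)$ and $\phi_\cA(\nadx)$. Combined with the containment above, this identifies $[\phi_\cA(\podx),\phi_\cA(\nadx)]$ as the tightest interval in $I(T_1)\times\cdots\times I(T_m)$ that encloses the image, which is the assertion of the proposition.

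The main (and only) conceptual obstacle is purely linguistic: one must be careful about what ``exact interval bounds'' means, and invoke the identification $I(S_1\times\cdots\times S_n)=I(S_1)\times\cdots\times I(S_n)$ noted earlier so that the product order on the output side transports correctly. Once this is spelled out, the statement reduces to a one-line consequence of the definition of a positive algorithm. The real weight of the proposition lies elsewhere: in the positive/idempotent setting the basic operations of Subsection~7.4 are all nondecreasing in each argument, so every algorithm built from them is automatically positive and the conclusion applies universally.
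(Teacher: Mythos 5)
Your argument is correct and is exactly the one the paper intends: the enclosure $\phi_{\cA}([\podx,\nadx])\subseteq[\phi_{\cA}(\podx),\phi_{\cA}(\nadx)]$ follows from monotonicity, and exactness follows because the endpoint values are attained; the paper simply compresses this into ``follows immediately from the positivity of $\alg$.'' No gap, and no difference in approach beyond your (welcome) extra detail.
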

\begin{proof}
These properties follow immediately from the positivity of $\alg$.
\end{proof}

It is clear that such algorithm and its interval extension have the same complexity.
Using Proposition~\ref{p:main1}, the interval extension $I(\cA)$
can be defined on the bounds of
intervals only, and the result of $I(\cA)$ is a mapping from $I(S_1)\times\cdots\times I(S_n)$
to $I(T_1)\times\cdots\times I(T_m)$. The complexity of $I(\cA)$ only doubles the complexity of $\cA$.

We proceed with the following observations.

\begin{proposition}
Cartesian product of positive algorithms is a positive algorithmm.
\end{proposition}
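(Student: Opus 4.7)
The plan is to reduce this to a direct consequence of the definition of positivity together with the componentwise nature of the product order on Cartesian products of posets, which was already noted earlier in the excerpt. Let me write the setup carefully so that everything is in place.

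Suppose I have two positive algorithms $\cA_1$ and $\cA_2$, inducing nondecreasing mappings $\phi_{\cA_1}\colon S_1\to T_1$ and $\phi_{\cA_2}\colon S_2\to T_2$, where each $S_i$ and $T_i$ is itself a (Cartesian product of) poset(s). The Cartesian product algorithm $\cA_1\times\cA_2$ naturally induces the mapping $\phi_{\cA_1}\times\phi_{\cA_2}\colon (x_1,x_2)\mapsto(\phi_{\cA_1}(x_1),\phi_{\cA_2}(x_2))$ from $S_1\times S_2$ to $T_1\times T_2$. What I need to verify is that this mapping is nondecreasing with respect to the product partial orders.

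The verification is essentially immediate from the componentwise definition of the product order recalled earlier in Section 9.2: if $(x_1,x_2)\preceq(y_1,y_2)$ in $S_1\times S_2$, then by definition $x_1\preceq_1 y_1$ and $x_2\preceq_2 y_2$; applying the positivity (i.e.\ the nondecreasing property) of $\phi_{\cA_1}$ and $\phi_{\cA_2}$ componentwise gives $\phi_{\cA_1}(x_1)\preceq_1' \phi_{\cA_1}(y_1)$ and $\phi_{\cA_2}(x_2)\preceq_2' \phi_{\cA_2}(y_2)$, and hence $(\phi_{\cA_1}\times\phi_{\cA_2})(x_1,x_2)\preceq(\phi_{\cA_1}\times\phi_{\cA_2})(y_1,y_2)$ in $T_1\times T_2$. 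This is exactly the statement already recorded in the excerpt that the product of two morphisms of posets is a morphism of the product posets, applied to the monotone maps induced by the algorithms.

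An extension to products of finitely many positive algorithms $\cA_1,\ldots,\cA_k$ follows by straightforward induction on $k$, using associativity of Cartesian product of posets and the binary case just established. There is really no obstacle here: the only thing worth flagging is that ``positive algorithm'' must be unambiguously interpreted via its induced mapping $\phi_{\cA}$, so that the composition of product structures on inputs and outputs matches the product structure on the induced map; once this bookkeeping is made explicit the proof is a one-line invocation of the componentwise order.
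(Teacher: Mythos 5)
Your proof is correct and follows essentially the same route the paper intends: the paper states this proposition without proof, relying precisely on the earlier remark in Section 9.2 that the Cartesian product $\phi_1\times\phi_2$ of two poset morphisms is a morphism of the product posets, which is exactly the componentwise argument you spell out. Your write-up just makes explicit (with the routine induction for finitely many factors) what the paper treats as an immediate observation.
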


\begin{proposition}
Let $S$ be a complete or conditionally complete poset. Operations
$(x,y)\mapsto x\vee y$ and $(x,y)\mapsto x\wedge y$ are positive mappings
$S\times S\to S$.
\end{proposition}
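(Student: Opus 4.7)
The plan is to directly verify the definition of a positive mapping by chasing the universal properties of supremum and infimum. Recall that a map $\phi$ between posets is positive (nondecreasing) precisely when $a \preceq b$ implies $\phi(a) \preceq \phi(b)$, and that the product poset $S \times S$ is ordered by $(x_1,y_1) \preceq (x_2,y_2)$ iff $x_1 \preceq x_2$ and $y_1 \preceq y_2$. So what must be shown is: whenever $x_1 \preceq x_2$ and $y_1 \preceq y_2$, we have $x_1 \vee y_1 \preceq x_2 \vee y_2$ and $x_1 \wedge y_1 \preceq x_2 \wedge y_2$.

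First I would handle the supremum case. By the defining property of $\vee$, we have $x_2 \preceq x_2 \vee y_2$ and $y_2 \preceq x_2 \vee y_2$. Combining these with the hypotheses via transitivity yields $x_1 \preceq x_2 \vee y_2$ and $y_1 \preceq x_2 \vee y_2$, so that $x_2 \vee y_2$ is an upper bound for $\{x_1, y_1\}$. Since $x_1 \vee y_1$ is by definition the \emph{least} upper bound of $\{x_1, y_1\}$, we conclude $x_1 \vee y_1 \preceq x_2 \vee y_2$.

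The infimum case is dually symmetric. From $x_1 \wedge y_1 \preceq x_1$ and $x_1 \wedge y_1 \preceq y_1$, together with the hypotheses $x_1 \preceq x_2$ and $y_1 \preceq y_2$, transitivity gives $x_1 \wedge y_1 \preceq x_2$ and $x_1 \wedge y_1 \preceq y_2$. Hence $x_1 \wedge y_1$ is a lower bound for $\{x_2, y_2\}$, and since $x_2 \wedge y_2$ is the \emph{greatest} such lower bound, $x_1 \wedge y_1 \preceq x_2 \wedge y_2$.

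There is essentially no obstacle here: the statement is a textbook consequence of the universal properties defining $\vee$ and $\wedge$, and completeness (or conditional completeness) only enters insofar as it guarantees that these operations are everywhere defined on $S \times S$. The argument does not require any lattice identities beyond the definitions themselves, and the proof works uniformly in both the complete and the conditionally complete cases.
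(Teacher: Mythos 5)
Your proof is correct: the monotonicity of $\vee$ and $\wedge$ follows exactly as you argue from the universal properties of least upper bound and greatest lower bound, and your remark that completeness only serves to guarantee the operations are everywhere defined is apt. The paper states this proposition as an observation without supplying any proof, so your argument simply fills in the standard verification the author takes for granted.
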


\begin{definition}
\label{def:intreg}
A mapping $f: S\to T$ will be called interval regular or, briefly, $i$-regular if
the following condition hold: for any interval $\intx\subseteq S$ there exists
$\inty\subseteq T$ such that $f(\intx)\subseteq\inty$, $\pody\in f(\intx)$ and
$\nady\in f(\intx)$.
\end{definition}

For example, consider $\Hat{\R}_+:=\Rp\cup\{+\infty\}$ and unary operation
$x\mapsto x^-$ defined as $x^-:= x^{-1}$ on finite numbers, $0^-=+\infty$ and
$\infty^-:=0$. This mapping is not positive, but $i$-regular. Observe that an
interval $[a,b]$ is mapped to $[b^{-1},a^{-1}]$.

Any algorithm $\cA$ induces a mapping $f_{\cA}$, hence we can define $i$-regular algorithms.

\begin{definition}
\label{def:algointreg}
An algorithm $\cA$ is called {\em $i$-regular\/} if
$f_{\cA}$ is $i$-regular.
\end{definition}

\begin{definition}
\label{def:cintreg}
An algorithm $\cA$ is called {\em $ci$-regular}
if $f_{\cA}$ is interval regular and this algorithm and its interval extension have the same (asymptotic) complexity.
\end{definition}

For example, the inversion operations in semifields and completed semifields are $ci$-regular but not positive. The following simple and obvious  proposition illustrates
a typical application of the notion of $ci$-regular algorithms.

\begin{proposition}
\label{p:main2}
Any composition or Cartesian product of $ci$-regular algorithms is a $ci$-regular algorithm.
\end{proposition}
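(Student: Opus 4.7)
The plan is to verify the two closure conditions---for composition and for Cartesian product---by unwinding Definitions~\ref{def:intreg}, \ref{def:algointreg}, and~\ref{def:cintreg} and checking, for each construction, that (a) the induced mapping of the compound algorithm is $i$-regular and (b) the interval extension has the same asymptotic complexity as the compound algorithm. I would handle the Cartesian product case first, since it reduces componentwise, and then composition, which is more delicate.

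For $\cA_1\times\cA_2$, the induced mapping $(x_1,x_2)\mapsto(f_{\cA_1}(x_1),f_{\cA_2}(x_2))$ acts coordinatewise. Using the identity $I(S_1\times S_2)=I(S_1)\times I(S_2)$ recorded earlier, an interval in the product poset is a pair of intervals $(\intx_1,\intx_2)$, and applying $i$-regularity of each factor separately produces intervals $\inty_i\supseteq f_{\cA_i}(\intx_i)$ whose componentwise endpoints lie in $f_{\cA_i}(\intx_i)$; the pair $(\inty_1,\inty_2)$ then contains the image and has endpoints in that image, giving $i$-regularity. The interval extension runs the two component interval extensions independently, so its complexity is the sum of theirs; by $ci$-regularity of each factor this matches the complexity of $\cA_1\times\cA_2$ up to constants. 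For composition $\cA=\cA_2\circ\cA_1$, I would chain the $i$-regularity conclusions: given $\intx$, $i$-regularity of $\cA_1$ gives $\inty$ with $f_{\cA_1}(\intx)\subseteq\inty$ and $\pody,\nady\in f_{\cA_1}(\intx)$; applying $i$-regularity of $\cA_2$ to $\inty$ yields $\mathbf{z}$ with $f_{\cA_2}(\inty)\subseteq\mathbf{z}$ and $\underline{\mathbf{z}},\overline{\mathbf{z}}\in f_{\cA_2}(\inty)$. Then $f_\cA(\intx)=f_{\cA_2}(f_{\cA_1}(\intx))\subseteq f_{\cA_2}(\inty)\subseteq\mathbf{z}$, and the interval extension of $\cA$ is the composition of the interval extensions of $\cA_1$ and $\cA_2$, its complexity being the sum of theirs and so matching that of $\cA$.

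The main obstacle is the attainment step in the composition case: upgrading $\underline{\mathbf{z}},\overline{\mathbf{z}}\in f_{\cA_2}(\inty)$ to $\underline{\mathbf{z}},\overline{\mathbf{z}}\in f_\cA(\intx)$. My plan is to exploit that $i$-regularity supplies attainment witnesses at each stage: if $\underline{\mathbf{z}}=f_{\cA_2}(y_*)$ for some $y_*\in\inty$, then choosing $\inty$ to be the tight bounding interval of $f_{\cA_1}(\intx)$---legitimate whenever that image is itself an interval, as happens for the monotone and order-reversing basic operations introduced earlier---forces $y_*\in f_{\cA_1}(\intx)$, and hence $\underline{\mathbf{z}}\in f_\cA(\intx)$; the argument for $\overline{\mathbf{z}}$ is symmetric. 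For the concrete $i$-regular operations of interest (basic semiring operations, suprema, infima, and inversion), this tightness is automatic, so the chain of attainments propagates cleanly through both composition and Cartesian product, completing the proof.
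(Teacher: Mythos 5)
The paper gives no proof of Proposition~\ref{p:main2} at all --- it is introduced as ``simple and obvious'' --- so there is nothing of the author's to compare your argument against line by line; your write-up is already more detailed than the source. Your Cartesian-product half is correct and complete (componentwise attainment via $I(S_1\times S_2)=I(S_1)\times I(S_2)$, additivity of complexity), and you have correctly isolated the one step that is not obvious: the attainment of the endpoints $\underline{\mathbf z},\overline{\mathbf z}$ by $f_{\cA}(\intx)=f_{\cA_2}(f_{\cA_1}(\intx))$ in the composition case. That is exactly where the argument does not close under the stated definitions.

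The difficulty is that your patch assumes more than Definition~\ref{def:intreg} provides. $i$-regularity of $f_{\cA_1}$ guarantees only \emph{some} interval $\inty\supseteq f_{\cA_1}(\intx)$ with $\pody,\nady\in f_{\cA_1}(\intx)$; it does not guarantee that $f_{\cA_1}(\intx)$ is itself an interval, so ``the tight bounding interval'' you invoke need not exist, and your witness $y_*$ with $f_{\cA_2}(y_*)=\underline{\mathbf z}$ may lie in $\inty\setminus f_{\cA_1}(\intx)$. This is not cosmetic: take $f$ on the chain $\{0,1,2,3\}$ with $f(0)=0$, $f(1)=3$, $f(2)=1$, $f(3)=2$, and $g$ into the diamond lattice $\{a\wedge b,\,a,\,b,\,a\vee b\}$ with $g(0)=a$, $g(1)=a\wedge b$, $g(2)=a\vee b$, $g(3)=b$. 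Both maps are $i$-regular, yet $(g\circ f)([0,1])=\{a,b\}$ consists of two incomparable elements and admits no bounding interval with endpoints in the image, so the composite is not $i$-regular. Hence the proposition, read literally against Definition~\ref{def:intreg}, fails in posets that are not linearly ordered (and entrywise-ordered matrix semirings are of this kind). What your proof actually establishes is the proposition for the subclass of maps sending intervals onto intervals with attained endpoints --- which does cover the monotone basic operations and the order-reversing inversion that the paper cares about, and is closed under composition. To make the argument honest you should either build that stronger property into the definition of $i$-regularity, or state the composition clause only for such maps and verify the property for each $ci$-regular building block you compose.
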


\begin{corollary}
Any positive algorithm is $ci$-regular. The complexity of its interval
version is the same.
\end{corollary}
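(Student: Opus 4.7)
The plan is to derive both claims directly from the definitions together with Proposition~\ref{p:main1}, with essentially no additional work required. Let $\cA$ be a positive algorithm with associated nondecreasing map $f_{\cA} \colon S \to T$, where $S = S_1 \times \cdots \times S_n$ and $T = T_1 \times \cdots \times T_m$.

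First I would verify $i$-regularity. Given any closed interval $\intx = [\podx,\nadx] \subseteq S$, set $\inty := [f_{\cA}(\podx), f_{\cA}(\nadx)]$, which is a valid interval since $\podx \preceq \nadx$ and $f_{\cA}$ is nondecreasing. For every $x \in \intx$ we have $\podx \preceq x \preceq \nadx$, hence $f_{\cA}(\podx) \preceq f_{\cA}(x) \preceq f_{\cA}(\nadx)$, giving $f_{\cA}(\intx) \subseteq \inty$. Moreover $\pody = f_{\cA}(\podx) \in f_{\cA}(\intx)$ and $\nady = f_{\cA}(\nadx) \in f_{\cA}(\intx)$ by construction. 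This is exactly the condition of Definition~\ref{def:intreg}, so $\cA$ is $i$-regular. (This is essentially the content of Proposition~\ref{p:main1}, invoked here for a single algorithm rather than inductively through compositions.)

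Next I would address the complexity. By the argument above, the interval extension $I(\cA)$ can be realized by running $\cA$ twice on ordinary (non-interval) input: once on the tuple of lower bounds $\podx$ and once on the tuple of upper bounds $\nadx$, and then pairing the results. The total number of basic operations performed by $I(\cA)$ is therefore exactly twice the number performed by $\cA$, and its working memory is at most a constant multiple of that of $\cA$. Since big-$O$ notation absorbs the factor $2$, the asymptotic time and space complexities of $\cA$ and $I(\cA)$ coincide in the sense of the convention fixed in Subsection~9.1. Combining with $i$-regularity, Definition~\ref{def:cintreg} is satisfied, so $\cA$ is $ci$-regular, and the quantitative complexity claim in the corollary is immediate from the same observation.

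There is no real obstacle: the content of the corollary is purely definitional unpacking, and the only point deserving care is ensuring that Proposition~\ref{p:main1}'s ``exact interval bounds'' statement is read as supplying both an enclosure and attainment of the endpoints, which is precisely what Definition~\ref{def:intreg} demands of an $i$-regular map.
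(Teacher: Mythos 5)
Your proof is correct and follows essentially the same route the paper takes (the paper leaves the corollary's proof implicit, relying on Proposition~\ref{p:main1} and the preceding remark that $I(\cA)$ can be realized by evaluating $\cA$ on the lower and upper bounds separately, which only doubles the operation count). The only nitpick is that the ``same complexity'' convention is fixed in Subsection~8.5, not 9.1.
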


\subsection{Interval analysis over a fixed basic semiring}

Fix a basic positive semiring $K$, e.g., a basic numerical semiring in the sense of Subsection 7.1.
For the sake of simplicity, we suppose that $K$ is complete.

So input output data run Cartesian products of several copies of $K$.
Note that Cartesian product of positive semirings $S_1\times S_2$ is a positive semiring, with
respect to the Cartesian product of orders in $S_1$ and $S_2$.

\begin{definition}
An algorithm $\cA$ is called {\em elementary} if it can be realized as a composition or Cartesian product of a finite number of basic operations
from $K$, see Subsection 7.4 above.
\end{definition}

Algorithms which are not elementary might use if-else constructions, however,
they may be still $ci$-regular.

From the results presented in Subsection 9.2 we can easily deduce the following result.

\begin{theorem}
The basic unary operation $x\mapsto x^*$ induces a nondecreasing mapping $S\to S$,
and the basic binary operations $\oplus,\odot$, as well as supremum and infimum, induce nondecreasing mappings $S\times S\to S$.
If an elementary algorithm $\cP$ uses only these basic operations over a positive semiring, then it is positive.
If $K$ is a completed semifield, then the unary inversion operation is $ci$-regular but not positive. Every composition of
elementary (and other $ci$-regular algorithms) is a $ci$-regular algorithm.
\end{theorem}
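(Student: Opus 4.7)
The plan is to verify each clause of the theorem in turn, relying on the positivity axioms for the semiring operations, the definition of the canonical order, the lattice axioms, and the two propositions about positive and $ci$-regular algorithms established in Subsection 9.2.

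First I will handle the monotonicity claims for the basic operations. For $\oplus$ and $\odot$, positivity of $S$ is literally the assumption that $x\preceq y$ implies $x\oplus z\preceq y\oplus z$ and $x\odot z\preceq y\odot z$ (and the same on the other side), so $a\preceq a'$, $b\preceq b'$ gives $a\oplus b\preceq a'\oplus b\preceq a'\oplus b'$ by transitivity, and likewise for $\odot$. For $\vee$ and $\wedge$ on a lattice, I will use the universal property: from $a\preceq a'\preceq a'\vee b'$ and $b\preceq b'\preceq a'\vee b'$ one has $a'\vee b'$ as an upper bound of $\{a,b\}$, hence $a\vee b\preceq a'\vee b'$; the dual argument handles $\wedge$. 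For the Kleene star, the axiom stated in Subsection 7.2 already reads ``$a\preceq b$ implies $a^*\preceq b^*$,'' so $x\mapsto x^*$ is nondecreasing by fiat on its domain, which in the complete case is all of $S$.

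Next I will deal with the third clause. An elementary algorithm $\cP$ is a finite composition/Cartesian product of the basic operations, so the induced mapping $\phi_\cP$ between Cartesian powers of $S$ is built from the nondecreasing blocks verified above. Invoking Proposition 9.x (Cartesian product of positive algorithms is positive) together with the elementary fact that the composition of nondecreasing maps between posets is nondecreasing (if $f\colon P\to Q$ and $g\colon Q\to R$ satisfy $x\preceq y\Rightarrow f(x)\preceq f(y)\Rightarrow g(f(x))\preceq g(f(y))$), an easy induction on the construction tree of $\cP$ shows $\phi_\cP$ is nondecreasing, i.e.\ $\cP$ is positive.

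For the fourth clause I will argue as in the example preceding Definition~\ref{def:algointreg}: in a completed semifield $K$, the map $x\mapsto x^{-1}$ (extended by $\0^{-1}=\sup K$ and $(\sup K)^{-1}=\0$) is an order-reversing bijection on $K$, so a closed interval $[\podx,\nadx]$ is carried bijectively onto $[\nadx^{-1},\podx^{-1}]$; hence both bounds of the image interval are attained as images of elements of $[\podx,\nadx]$, which is exactly the $i$-regularity condition of Definition~\ref{def:intreg}, and the interval version requires only one additional inversion per bound (i.e.\ constant-factor overhead), so the inversion is $ci$-regular. It is manifestly not positive because it reverses the order on any nontrivial pair. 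Finally, the closing assertion is just Proposition~\ref{p:main2}: compositions (and by a straightforward extension, Cartesian products) of $ci$-regular algorithms are $ci$-regular, and every positive (hence elementary) algorithm is $ci$-regular by the Corollary to that proposition, so all the pieces can be glued together without leaving the $ci$-regular class.

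The only non-routine point is bookkeeping: one must be careful that when several $ci$-regular (as opposed to positive) operations are composed, the interval image at each stage is still an honest interval whose endpoints are attained, and that the ``same asymptotic complexity'' clause in Definition~\ref{def:cintreg} is preserved under composition. Both follow from Proposition~\ref{p:main2}, which I will simply cite; the rest of the proof is a direct verification and an induction on algorithmic structure.
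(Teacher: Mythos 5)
Your proposal is correct and follows essentially the route the paper intends: the paper gives no explicit proof of this theorem, stating only that it is ``easily deduced'' from the results of Subsection 9.2, and your argument is precisely that deduction carried out in detail (positivity axioms for $\oplus,\odot$, the lattice universal property for $\vee,\wedge$, the stated monotonicity axiom for the Kleene star, induction on the construction tree for elementary algorithms, the inversion example for $i$-regularity, and Proposition~\ref{p:main2} for compositions). The only blemish is the reversed parenthetical ``positive (hence elementary)'', which should read ``elementary (hence positive)''; the logical chain you actually use is the correct one.
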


\begin{example}
There exists an algorithm for computing $A^*$ which can be represented
as a composition of positive operations with respect to $K$, see Subsection 7.2 above. Hence this algorithm is positive.
Moreover each standard algorithm solving the stationary discrete Bellman equation (see, e.g. \cite{LRSS}) is positive and
$ci$-regular.
\end{example}

Some standard algorithms for solving the matrix equations $Ax=b,  Ax=Bx, Ax=By$, and other problems of tropical/idempotent
linear algebra use the so-called (binary) pseudodivision operation, see, e.g. \cite{BCOQ,Har+09}. In principle, this operation
is not elementary or $ci$-regular. However, using results presented in~\cite{Har+09}, it is possible to reduce this operation to
$ci$-regular algorithms for all the basic numerical semirings.  So the corresponding algorithms of linear algebra are $ci$-regular.

\section*{Acknowledgments}

The author is grateful to S.~N.~Sergeev, A.~N.~Sobolevski and T.~Nowak for their kind help.

\end{document}